\tikzset{join/.code=\tikzset{after node path={%
\ifx\tikzchainprevious\pgfutil@empty\else(\tikzchainprevious)%
edge[every join]#1(\tikzchaincurrent)\fi}}}
\tikzset{>=stealth',every on chain/.append style={join},
         every join/.style={->}}
\tikzstyle{labeled}=[execute at begin node=$\scriptstyle,
\title{On the Relationship Between Real and Complex Linear Systems}
\author{C\'edric Josz\footnotemark[1] }
\begin{document}
\maketitle

\renewcommand{\thefootnote}{\fnsymbol{footnote}}

\footnotetext[1]{Laboratory for Analysis and Architecture of Systems (LAAS), French National Center for Scientific Research (CNRS), 7, avenue du Colonel Roche, Toulouse, 31000, France (\email{cedric.josz@gmail.com}). The research was funded by the European Research Council (ERC) under the European Union's Horizon 2020 research and innovation program (grant agreement 666981 TAMING).}

\renewcommand{\thefootnote}{\arabic{footnote}}

\slugger{mms}{xxxx}{xx}{x}{x--x}

\begin{abstract}
We consider the problem of solving a linear system of equations which involves complex variables and their conjugates. We characterize when it reduces to a complex linear system, that is, a system involving only complex variables (and not their conjugates). In that case, we show how to construct the complex linear system. Interestingly, this provides a new insight on the relationship between real and complex linear systems. In particular, any real symmetric linear system of equations can be solved via a complex linear system of equations. Numerical illustrations are provided. The mathematics in this manuscript constitute an exciting interplay between Schur's complement, Cholesky's factorization, and Cauchy's interlace theorem.
\end{abstract}

\begin{keywords}
Linear algebra, Orthogonal projection, Schur's complement, Cholesky factorization.
\end{keywords}


\pagestyle{myheadings}
\thispagestyle{plain}
\markboth{C\'EDRIC JOSZ}{Real and Complex Linear Systems}
\section{Introduction}
Let $\mathbb{N}$, $\mathbb{R}$ and $\mathbb{C}$ respectively denote the set of natural, real and complex numbers, and let $i$ denote the imaginary number satisfying $i^2=-1$. Let $\Re z$, $\Im z$, and $\bar{z}$ respectively denote the real part, the imaginary part, and the complex conjugate of $z \in \mathbb{C}$. Let $\mathcal{M}_n(\mathbb{R})$ and $\mathcal{M}_n(\mathbb{C})$ respectively denote the set of real-valued and complex-valued square matrices of size $n \in \mathbb{N}$. Consider $M,N \in \mathcal{M}_n(\mathbb{C})$ and $p \in \mathbb{C}^n$ and the system
\begin{equation}
\label{eq:system}
Mz + N\bar{z} = p~,~~~ z \in \mathbb{C}^n.
\end{equation}
The set of solutions to \eqref{eq:system} is a real affine space, but not a complex affine space in general. For instance, if $n=1$, $M=N=1$, and $p=0$, then the system reads $z + \bar{z} = 0,~z\in \mathbb{C}$. 
We raise the question of when system \eqref{eq:system} can be reduced to a complex linear system 
\begin{equation}
\label{eq:complex}
A z = b , ~~ z \in \mathbb{C}^n
\end{equation}
where $A \in \mathcal{M}_n(\mathbb{C})$ and $b \in \mathbb{C}^n$, and if so, how $A$ and $b$ can be constructed. Naturally, the set of solutions to \eqref{eq:complex} is a complex affine space. Thus the system reduction can take place if and only if the set of solutions of system \eqref{eq:system} is a complex affine space. 

The paper is organized as follows. Section \ref{sec:Reduction to complex linear system} contains the main results which are applied to real linear systems in Section \ref{sec:Application}. Section \ref{sec:Conclusion} concludes our work.
\section{Reduction to complex linear system}
\label{sec:Reduction to complex linear system}
When system \eqref{eq:system} has no solution, it can be reduced to a complex linear system \eqref{eq:complex} with $A = 0$ and $b = p$. When it has one solution or more, Proposition \ref{prop:reduction} provides a constructive reduction. We thus introduce the notation $\mathcal{R}(M,N)$ to denote the range of system \eqref{eq:system}, that is to say
\begin{equation}
\mathcal{R}(M,N) := \{ p \in \mathbb{C}^n ~|~ \exists z \in \mathbb{C}^n : Mz + N\bar{z} = p \}.
\end{equation}
Let's equip $\mathbb{C}^{2n}$ with Hermitian inner product $\langle \cdot, \cdot \rangle$ defined by $\langle u , v \rangle = \sum\limits_{k=1}^{2n} u_k \overline{v}_k$. Let $\mathcal{R}$ and $\text{Ker}$ respectively denote the range and Kernel of a linear application. The key ingredient to obtain the reduction is the direct sum
\begin{equation}
\label{eq:direct}
\mathbb{C}^{2n} = \mathcal{R} \begin{pmatrix} N \\[.1cm] \overline{M} \end{pmatrix} \oplus \left[  \mathcal{R} \begin{pmatrix} N \\[.1cm] \overline{M} \end{pmatrix} \right]^{\perp}
\end{equation}
and, in particular, the orthogonal projection $P_{\perp}$ onto the latter set in the direct sum. 
\begin{proposition}
\label{prop:reduction}
Let $n \in \mathbb{N}$ and $M,N \in \mathcal{M}_n(\mathbb{C})$. The following are equivalent:
\begin{enumerate}
\item $\{ z \in \mathbb{C}^n ~|~ Mz + N \bar{z} = 0 \}$ is a complex vector space.
\item The application defined from $\mathbb{C}^n$ to $\mathbb{C}^{2n}$ defined by $z \longmapsto P_{\perp} \begin{pmatrix} z\\ \bar{z} \end{pmatrix}$ is injective.
\item $\exists U,V \in \mathcal{M}_n(\mathbb{C}): ~ \forall p \in \mathcal{R}(M,N),$
\begin{equation}
\begin{array}{c}
\{ z \in \mathbb{C}^n ~|~ Mz + N \bar{z} = p \} \\[.2cm]
= \\[.2cm]
\{ z \in \mathbb{C}^n ~~|~~ (UM + V\overline{N})z = Up + V\overline{p} ~ \}.
\end{array}
\end{equation}
\end{enumerate}
\end{proposition}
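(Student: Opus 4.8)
The plan is to organize everything around the homogeneous solution set $K := \{z\in\mathbb{C}^n \mid Mz+N\bar z = 0\}$ and its sign-flipped companion $K' := \{v\in\mathbb{C}^n \mid Mv - N\bar v = 0\}$. The first thing I would record is that multiplication by $i$ carries $K$ bijectively onto $K'$ (if $Mz=-N\bar z$ then $M(iz)=N\overline{iz}$), so $\dim_{\mathbb{R}}K = \dim_{\mathbb{R}}K'$ and, crucially, statement 1 (that $K$ is a complex vector space, i.e. $iK=K$) is equivalent to the equality $K=K'$. A one-line computation then shows $K=K'$ forces every $z\in K$ to satisfy $Mz=0$ and $N\bar z=0$ separately (combine $Mz=-N\bar z$ with $Mz=N\bar z$), giving the clean identity $K=\text{Ker}\begin{pmatrix} M \\ \overline N\end{pmatrix}$ under statement 1. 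These reformulations are the backbone of the argument.

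For $1\Leftrightarrow 2$ I would compute the kernel of the $\mathbb{R}$-linear map $\Phi:z\mapsto P_{\perp}\begin{pmatrix} z\\\bar z\end{pmatrix}$. Since $\text{Ker}(P_\perp)=\mathcal{R}\begin{pmatrix} N\\\overline M\end{pmatrix}$, the vector $\Phi(z)$ vanishes exactly when $\begin{pmatrix} z\\\bar z\end{pmatrix}=\begin{pmatrix} N\\\overline M\end{pmatrix}w$ for some $w$; eliminating $w$ (set $v=\bar w$) turns this into $z=Mv=N\bar v$ with $v\in K'$, so $\text{Ker}(\Phi)=M(K')$. Hence statement 2 (injectivity of $\Phi$) is precisely $K'\subseteq\text{Ker}(M)$. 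Combined with the dimension equality from the first paragraph this closes the equivalence: if $K'\subseteq\text{Ker}(M)$ then $K'=\{v:Mv=0,\ N\bar v=0\}\subseteq K$, and equality of real dimensions upgrades this to $K=K'$; conversely $K=K'$ was already seen to place $K'$ inside $\text{Ker}(M)$.

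The substance of the proposition is $1\Rightarrow 3$, the construction of $U$ and $V$. Working under statement 1, I would first prove the transversality identity $\mathcal{R}\begin{pmatrix} M\\\overline N\end{pmatrix}\cap\mathcal{R}\begin{pmatrix} N\\\overline M\end{pmatrix}=\{0\}$: a common vector produces $z,w$ with $Mz=Nw$ and $\overline N z=\overline M w$, and writing $a=z$, $b=\bar w$ one finds $a+b\in K'$ and $a-b\in K$; since $K=K'$ and both $M$ and $\overline N$ annihilate $K$, the common vector $\begin{pmatrix} Ma\\\overline N a\end{pmatrix}$ is $0$. This is the step I expect to be the main obstacle, since it is where the complex structure of $K$ is converted into a genuine splitting of $\mathbb{C}^{2n}$. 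Granting it, set $B:=P_\perp\begin{pmatrix} M\\\overline N\end{pmatrix}$; because $\text{Ker}(P_\perp)=\mathcal{R}\begin{pmatrix} N\\\overline M\end{pmatrix}$ meets $\mathcal{R}\begin{pmatrix} M\\\overline N\end{pmatrix}$ only at the origin, $B$ has kernel exactly $K=\text{Ker}\begin{pmatrix} M\\\overline N\end{pmatrix}$.

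Finally I would choose $S\in\mathcal{M}_{n,2n}(\mathbb{C})$ injective on $\text{Im}(B)$, whose dimension is $n-\dim_{\mathbb{C}}K\leq n$ (such $S$ exists by a dimension count, e.g. a suitable coordinate projection), and define $\begin{pmatrix} U & V\end{pmatrix}:=SP_\perp$, reading off $U,V\in\mathcal{M}_n(\mathbb{C})$ as its two $n\times n$ blocks. Then $\begin{pmatrix} U & V\end{pmatrix}\begin{pmatrix} N\\\overline M\end{pmatrix}=SP_\perp\begin{pmatrix} N\\\overline M\end{pmatrix}=0$, i.e. $UN+V\overline M=0$, which makes the conjugate term cancel; consequently $UM+V\overline N=SB$ has kernel $K$, and for any $p\in\mathcal{R}(M,N)$ a particular solution $z_0$ of \eqref{eq:system} satisfies $(UM+V\overline N)z_0=Up+V\bar p$, so both solution sets coincide with $z_0+K$. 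The reverse implication $3\Rightarrow 1$ is then immediate: taking $p=0$ exhibits $\{z:Mz+N\bar z=0\}$ as the kernel of the complex matrix $UM+V\overline N$, hence a complex vector space.
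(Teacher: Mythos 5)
Your proof is correct, and it rests on the same two pillars as the paper's: the orthogonal projection $P_{\perp}$, which annihilates $\mathcal{R}\begin{pmatrix} N \\ \overline{M}\end{pmatrix}$ and hence kills the conjugate term, and a selection of $n$ rows (your coordinate projection $S$) to cut the $2n\times n$ system $P_{\perp}\begin{pmatrix} M \\ \overline{N}\end{pmatrix}z = P_{\perp}\begin{pmatrix} p \\ \overline{p}\end{pmatrix}$ down to $n\times n$; your $U,V$ are exactly the paper's. The organization differs in two ways worth noting. First, you prove $1\Leftrightarrow 2$ in both directions via the identities $iK=K'$ and $\mathrm{Ker}(\Phi)=M(K')$, whereas the paper proves only $1\Rightarrow 2$ and closes the cycle through statement 3; your version is a clean repackaging of the paper's computation ($z=Nw$, $\bar z=\overline{M}w$, subtract conjugates, multiply by $i$). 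Second, and more substantively, you drive the construction from statement 1 rather than statement 2: your key lemma is the transversality $\mathcal{R}\begin{pmatrix} M \\ \overline{N}\end{pmatrix}\cap\mathcal{R}\begin{pmatrix} N \\ \overline{M}\end{pmatrix}=\{0\}$ (via the $a\pm b$ trick and $K=K'=\mathrm{Ker}\begin{pmatrix} M \\ \overline{N}\end{pmatrix}$), which yields $\mathrm{Ker}(SB)=K$ and lets you identify both solution sets with $z_0+K$. The paper instead applies the injectivity of $z\mapsto P_{\perp}\begin{pmatrix} z \\ \bar z\end{pmatrix}$ to the conjugate-symmetric residual $\begin{pmatrix} w \\ \bar w\end{pmatrix}$ with $w=Mz+N\bar z-p$, obtaining a pointwise chain of equivalent equations for each fixed $z$. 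Both routes are valid and of comparable length; yours makes the kernel bookkeeping and the affine structure $z_0+K$ explicit, while the paper's avoids any dimension counting. All the individual claims you flag (statement 1 equivalent to $K=K'$, the formula $\mathrm{Ker}(\Phi)=M(K')$, the transversality computation, the existence of $S$, and the transfer of the particular solution to the reduced system) check out.
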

\begin{proof}
The implication $(3 \Longrightarrow 1)$ follows from taking $p=0$. As for $(1 \Longrightarrow 2)$, consider $z \in \mathbb{C}^n$ such that 
\begin{equation}
P_{\perp} \begin{pmatrix} z\\ \bar{z} \end{pmatrix} = 0.
\end{equation}
Thus 
\begin{equation}
\begin{pmatrix} z\\[.1cm] \bar{z} \end{pmatrix} \in \mathcal{R} \begin{pmatrix} N \\[.1cm] \overline{M} \end{pmatrix}.
\end{equation}
As a consequence, there exists $w \in \mathbb{C}^n$ such that 
\begin{equation}
\label{eq:co}
\begin{pmatrix}
z \\[.1cm]
\bar{z}
\end{pmatrix}
=
\begin{pmatrix}
 Nw \\[.1cm]
\overline{M}w
\end{pmatrix}
\end{equation}
The first line minus the conjugate of the second line yields 
\begin{equation}
\label{eq:cool}
Nw - M \overline{w} = 0.
\end{equation} Multiplying by the imaginary number, we get $M(\overline{iw}) + N(iw) = 0$. Thus $\overline{iw}$ belongs to $\{ z \in \mathbb{C}^n ~|~ Mz + N \bar{z} = 0 \}$, which is a complex vector space thanks to Point 1. Thus $i(\overline{iw}) = \overline{w}$ also belongs to it, so that $M\overline{w} + Nw = 0$. Together with \eqref{eq:cool}, this implies that $Nw = M\overline{w} = 0$. Going back to \eqref{eq:co}, it holds that $z = 0$.

We now prove $(2 \Longrightarrow 3)$. Let $p \in \mathcal{R}(M,N)$ and consider $z\in \mathbb{C}^n$. Point 2 implies that the following four equations are equivalent:
\begin{equation}
Mz+N\bar{z} = p 
\end{equation}
\begin{equation}
\begin{pmatrix} Mz \\[.1cm] \overline{N}z \end{pmatrix} + \begin{pmatrix} N \bar{z} \\[.1cm] \overline{M} \bar{z} \end{pmatrix} =  \begin{pmatrix} p \\[.15cm] \overline{p} \end{pmatrix}
\end{equation}
\begin{equation}
P_{\perp} \begin{pmatrix} Mz \\[.1cm] \overline{N}z \end{pmatrix} + P_{\perp}\begin{pmatrix} N \bar{z} \\[.1cm] \overline{M} \bar{z} \end{pmatrix} =  P_{\perp}\begin{pmatrix} p \\[.15cm] \overline{p} \end{pmatrix}
\end{equation}
\begin{equation}
\label{eq:proj}
P_{\perp} \begin{pmatrix} Mz \\[.1cm] \overline{N}z \end{pmatrix} = P_{\perp}\begin{pmatrix} p \\[.15cm] \overline{p} \end{pmatrix}
\end{equation}
since, by definition of $P_{\perp}$, we have
\begin{equation}
P_{\perp}\begin{pmatrix} N \bar{z} \\[.1cm] \overline{M} \bar{z} \end{pmatrix} = 0.
\end{equation}
Let $\begin{pmatrix} \hat{U} ~ \hat{V} \end{pmatrix}$ denote the matrix representation of $P_{\perp}$ in the canonical basis of $\mathbb{C}^{2n}$. The complex matrices $\hat{U}$ and $\hat{V}$ have $2n$ rows and $n$ columns. Equation \eqref{eq:proj} is thus equivalent to
\begin{equation}
\label{eq:hat}
(\hat{U}M + \hat{V}\overline{N})z = \hat{U}p + \hat{V}\overline{p}.
\end{equation}
The matrix $\hat{U}M+\hat{V}\overline{N}$ has $2n$ rows and $n$ columns so the above system has $2n$ equations and $n$ unknowns. We would like to obtain a system with $n$ equations. First, note that it has at least one solution because $p \in \mathcal{R}(M,N)$. Second, a linear system of equations with more equations than unknowns either has no solutions, or any equation that is a linear combination of other equations can be removed without changing the set of solutions. Thus, consider a set $I \subset \{ 1 , \hdots , 2n \}$ of $n$ rows that generate the row space of $\hat{U}M+\hat{V}\overline{N}$. Define $U,V \in \mathcal{M}_n(\mathbb{C})$ such that
\begin{equation}
\begin{array}{c}
\forall i \in I,~ \forall j \in \{1,\hdots,n\}, ~ U_{ij} := \hat{U}_{ij},~ V_{ij} := \hat{V}_{ij}.
\end{array}
\end{equation}
We stress that $U$ and $V$ do not depend on the vector $p$, but only on the matrices $M$ and $N$. We now decude that \eqref{eq:hat} is equivalent to $(UM + V\overline{N})z = Up + V\overline{p}$, which terminates the proof.
\end{proof}
Point 2 in Proposition \ref{prop:reduction} provides a necessary and sufficient condition on $M$ and $N$ to determine whether system \eqref{eq:system} can be reduced to a system of the form \eqref{eq:complex}. Let's write the condition more explicitly. The projection $P_{\perp}$ is an endormorphism of $\mathbb{C}^{2n}$, so let's consider its block decomposition 
\begin{equation}
P_{\perp} = \begin{pmatrix} P_{11} & P_{12} \\ P_{21} & P_{22} \end{pmatrix}
\end{equation}
where $P_{11}, P_{12}, P_{11}, P_{11}$ are endormorphisms of $\mathbb{C}^{n}$. For all $z \in \mathbb{C}^n$, we then have
\begin{equation}
P_{\perp} \begin{pmatrix} z\\ \bar{z} \end{pmatrix} = \begin{pmatrix} P_{11} & P_{12} \\ P_{21} & P_{22} \end{pmatrix} \begin{pmatrix} x+iy\\ x-iy \end{pmatrix} = \begin{pmatrix} P_{11}+P_{12} & i(P_{11}-P_{12}) \\ P_{21}+P_{22} & i(P_{21}-P_{22}) \end{pmatrix} \begin{pmatrix} x\\ y \end{pmatrix}
\end{equation}
where $x := \Re z$ and $y := \Im z$. Point 2 is thus equivalent to
\begin{equation}
\label{eq:big}
\text{Ker} \begin{pmatrix} \Re P_{11}+ \Re P_{12} & \Im P_{12} - \Im P_{11} \\ \Re P_{21}+ \Re P_{22} &  \Im P_{22} - \Im P_{21} \\ 
\Im P_{11}+ \Im P_{12} & \Re P_{11}- \Re P_{12} \\ \Im P_{21}+ \Im P_{22} & \Re P_{21}- \Re P_{22} \end{pmatrix} = \{0\}
\end{equation}
where the domain of the application in \eqref{eq:big} is $\mathbb{C}^{4n}$ and its co-domain is $\mathbb{C}^{2n}$.\\\\
We illustrate Proposition \ref{prop:reduction} with the following example:
\begin{equation}
M := \begin{pmatrix}
0 & -i & 0 & 2-i & 5i \\
0 & 3i & 0 & 3 & 9i \\
-1 & 5 & 1-3i & -3+3i & 1+7i \\
0 & -2i & 0 & -i & -i \\
0 & 1-i & 0 & i &  -2-3i
\end{pmatrix},
\end{equation}
\begin{equation}
N := \begin{pmatrix}
-i & 5i & -3+i & 3-3i & 7+i \\
0 & -3 + 2i & 0 & -2 & -1+3i  \\
0 & -1 & 0 & -1+2i & 5 \\
0 & 1 & 0 & i & -i \\
0 & 5 & 0 & 7+4i & 1+2i
\end{pmatrix},
\end{equation}
\begin{equation}
p := \begin{pmatrix}
1-i \\
3 \\
-1+i \\
5+i \\
1
\end{pmatrix}.
\end{equation}
Using a reduced column echelon form and a Grahm-Schmidt orthogonalization, we compute the matrix associated to $P_{\perp}$ in the canonical basis of $\mathbb{C}^{2n}$. We then check whether the system can be reduced using \eqref{eq:big}. In this example, reduction is possible. We then compute a set of 5 rows $I = \{3,4,5,6,7\}$ that generate the row space of \begin{equation} P_{\perp} \begin{pmatrix} M \\[.1cm] \overline{N} \end{pmatrix}.\end{equation} We then deduce
$$
\small
U := 
\begin{pmatrix}
   0.0000 + 0.0000i & -0.1686 - 0.0351i &  \hphantom{-}0.6782 + 0.0000i &  \hphantom{-}0.0612 + 0.0175i & -0.0133 - 0.0054i \\
   0.0000 + 0.0000i &  \hphantom{-}0.1592 - 0.0009i  & \hphantom{-}0.0612 - 0.0175i  & \hphantom{-}0.9171 + 0.0000i &  \hphantom{-}0.0044 - 0.1532i \\ 
   0.0000 + 0.0000i &  \hphantom{-}0.0309 - 0.0069i & -0.0133 + 0.0054i  & \hphantom{-}0.0044 + 0.1532i &  \hphantom{-}0.0329 + 0.0000i \\
   0.0000 + 0.0000i &  -0.0351 + 0.1686i &  \hphantom{-}0.0000 + 0.3218i &  \hphantom{-}0.0175 - 0.0612i & -0.0054 + 0.0133i \\
   0.0000 + 0.0000i  & \hphantom{-}0.2393 - 0.1929i  & -0.0152 + 0.1880i  & -0.0881 + 0.0651i  & \hphantom{-}0.0169 + 0.0212i
\end{pmatrix}
$$
and
$$
\small
V:=
\begin{pmatrix}
		\hphantom{-}0.0000 - 0.3218i
	&	-0.0152 - 0.1880i
	&	0.0000 + 0.0000i
	&	-0.0128 - 0.1089i
	&	\hphantom{-}0.1766 + 0.0448i
	\\
\hphantom{-}0.0175 + 0.0612i & -0.0881 - 0.0651i & 0.0000 + 0.0000i &  \hphantom{-}0.0510 + 0.0511i  & \hphantom{-}0.0000 + 0.0433i \\
-0.0054 - 0.0133i  & \hphantom{-}0.0169 - 0.0212i  & 0.0000 + 0.0000i  &-0.0523 + 0.0314i & -0.0300 - 0.0397i \\
 \hphantom{-}0.6782 + 0.0000i & -0.1880 + 0.0152i  & 0.0000 + 0.0000i & -0.1089 + 0.0128i &  \hphantom{-}0.0448 - 0.1766i \\
 -0.1880 - 0.0152i  & \hphantom{-}0.4239 + 0.0000i  & 0.0000 + 0.0000i  & \hphantom{-}0.1337 - 0.0184i  & \hphantom{-}0.2104 - 0.0583i
\end{pmatrix}.
$$
Next, we compute the reduced complex linear system \eqref{eq:complex} where
$$
\small
A:=
\begin{pmatrix}
 -0.3563 + 0.0000i &  \hphantom{-}2.4429 + 0.0891i  &  \hphantom{-}0.3563 - 1.0690i & -0.2151 + 0.8851i &  \hphantom{-}0.5264 + 0.8788i \\
  -0.1225 + 0.0349i & \hphantom{-}0.6516 - 1.0495i &  \hphantom{-}0.0178 - 0.4024i &  \hphantom{-}0.7686 - 0.0611i & -0.1645 + 2.0554i \\
   \hphantom{-}0.0266 - 0.0107i  & -0.0681 - 0.0324i &  \hphantom{-}0.0057 + 0.0905i & -0.0774 - 0.1673i  & -0.1734 - 0.0619i \\
   \hphantom{-}0.0000 + 0.3563i &  \hphantom{-}0.0891 - 2.4429i  & -1.0690 - 0.3563i  & \hphantom{-}0.8851 + 0.2151i  & \hphantom{-}0.8788 - 0.5264i \\ 
   \hphantom{-}0.0303 - 0.3759i &  \hphantom{-}0.5094 + 1.6202i  & \hphantom{-}1.0975 + 0.4669i & \hphantom{-}0.0983 - 3.0761i &  -1.1428 + 0.6953i
\end{pmatrix}
$$
and
$$
b :=
\begin{pmatrix}
  -0.6283 - 0.6565i \\
   \hphantom{-}5.0216 + 0.9709i \\
  -0.1995 + 0.8183i \\
  -0.6565 + 0.6283i \\
   \hphantom{-}2.0157 - 1.0104i
\end{pmatrix}.
$$
We can now compute the solutions to the reduced system:
\begin{equation}
\begin{pmatrix} 
 -27.4310 +50.9483i \\
  -4.0647 + 5.7543i \\
   \hphantom{-.}0.0000 + 0.0000i \\
   \hphantom{-.}2.7694 - 1.2220i \\
   \hphantom{-.}0.6875 + 0.9203i
\end{pmatrix}
+ 
\mathbb{C}
\begin{pmatrix} 
1-3i \\
0 \\
1 \\
0 \\
0
\end{pmatrix}
\end{equation}

 We then arbitrarily chose one solution to the reduced system, and then check whether it satisfies the original system. If so, then the set of solutions to the original system and to the reduced system coincide. If not, then the original system has no solutions. In this example, the original system and the reduced system coincide.\\

We now turn our attention to the special case when the system \eqref{eq:system} has a unique solution. When in addition $M$ or $N$ is invertible, there exists an analytic expression for the matrices $U$ and $V$ in Proposition \ref{prop:reduction} that define the reduction to a complex linear system. Below, the matrix $I$ denotes the identity of $\mathcal{M}_n(\mathbb{C})$.
\begin{proposition}
\label{prop:system}
Let $n \in \mathbb{N}$ and $M,N \in \mathcal{M}_n(\mathbb{C})$.
\begin{equation}
\label{eq:inv}
\begin{array}{c}
\forall p \in \mathbb{C}^n, ~ \exists ! z \in \mathbb{C}^n: ~ Mz + N \bar{z} = p \\[.2cm] \Longleftrightarrow \\[.2cm] 
\begin{pmatrix}
M & N \\[.1cm]
\overline{N} & \overline{M} 
\end{pmatrix} ~\text{is invertible.}
\end{array}
\end{equation}
Assume that either equivalent property holds in \eqref{eq:inv}. Then, for all $p \in \mathbb{C}^n$,
\begin{equation}
\begin{array}{c}
\{ z \in \mathbb{C}^n ~|~ Mz + N \bar{z} = p \} \\[.2cm]
= \\[.2cm]
\{ z \in \mathbb{C}^n ~~|~~ (UM + V\overline{N})z = Up + V\overline{p} ~ \}
\end{array}
\end{equation}
where $U := I$ and $V:= -N\overline{M}^{-1}$ if $M$ is invertible, and where $U := N^{-1}$ and $V := -\overline{M}^{-1}$ if $M$ and $N$ are invertible.
\end{proposition}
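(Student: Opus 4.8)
The plan is to handle the equivalence \eqref{eq:inv} and the two explicit reductions as two separate tasks sharing one object, the $2n\times 2n$ augmented matrix
\[
\mathcal{K} := \begin{pmatrix} M & N \\ \overline{N} & \overline{M} \end{pmatrix}.
\]
The guiding observation is that, for a fixed $z\in\mathbb{C}^n$, the equation $Mz+N\bar z=p$ holds if and only if its conjugate $\overline{N}z+\overline{M}\bar z=\bar p$ holds; hence $z$ solves \eqref{eq:system} if and only if $(z,\bar z)^{\top}$ solves the complex system $\mathcal{K}(z,\bar z)^{\top}=(p,\bar p)^{\top}$.

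For \eqref{eq:inv}, I would view $\Phi(z):=Mz+N\bar z$ as an $\mathbb{R}$-linear endomorphism of $\mathbb{C}^n\cong\mathbb{R}^{2n}$, so that ``unique solution for every $p$'' is the same as $\Phi$ being bijective, which in finite dimension is the same as $\ker\Phi=\{0\}$. It then suffices to show $\ker\Phi=\{0\}\Leftrightarrow\mathcal{K}$ invertible. One direction is immediate from the guiding observation: if $\mathcal{K}$ is invertible and $\Phi(z)=0$, then $(z,\bar z)^{\top}\in\ker\mathcal{K}=\{0\}$. For the converse I would exploit the conjugate symmetry of $\mathcal{K}$: conjugating the two block rows shows that $(u,v)^{\top}\in\ker\mathcal{K}$ implies $(\bar v,\bar u)^{\top}\in\ker\mathcal{K}$. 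Forming the conjugate-symmetric combinations $a:=u+\bar v$ and $c:=i(u-\bar v)$, one checks that $(a,\bar a)^{\top}$ and $(c,\bar c)^{\top}$ both lie in $\ker\mathcal{K}$, i.e. $\Phi(a)=\Phi(c)=0$; and if $(u,v)^{\top}\neq0$ then $a$ and $c$ cannot both vanish, so $\ker\Phi\neq\{0\}$. Contrapositively, $\ker\Phi=\{0\}$ forces $\ker\mathcal{K}=\{0\}$.

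For the reduction, assume $\mathcal{K}$ invertible. The design principle behind both choices of $(U,V)$ is that they annihilate the coefficient of $\bar z$, i.e. $UN+V\overline{M}=0$: indeed $U=I,\,V=-N\overline{M}^{-1}$ gives $N-N\overline{M}^{-1}\overline{M}=0$, and $U=N^{-1},\,V=-\overline{M}^{-1}$ gives $I-I=0$. Left-multiplying $\mathcal{K}(z,\bar z)^{\top}=(p,\bar p)^{\top}$ by the block row $\begin{pmatrix} U & V\end{pmatrix}$ and discarding the vanishing $\bar z$-term produces exactly $(UM+V\overline{N})z=Up+V\bar p$; since every solution of \eqref{eq:system} solves the augmented system, it also solves the reduced one, giving the inclusion ``$\subseteq$''.

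The reverse inclusion I would obtain from invertibility of the reduced matrix. In the first case $UM+V\overline{N}=M-N\overline{M}^{-1}\overline{N}$ is precisely Schur's complement of the invertible block $\overline{M}$ in $\mathcal{K}$, hence invertible; in the second case $UM+V\overline{N}=N^{-1}M-\overline{M}^{-1}\overline{N}=N^{-1}\!\left(M-N\overline{M}^{-1}\overline{N}\right)$ is a product of invertible matrices. Thus the reduced system has a unique solution, and as the unique solution of \eqref{eq:system} belongs to its solution set, the two sets coincide. I expect the main obstacle to be the forward direction of \eqref{eq:inv}: the conjugate symmetry of $\mathcal{K}$ does not make $\ker\mathcal{K}$ conjugate-symmetric on the nose, so one must symmetrize a hypothetical kernel vector into $a$ and $c$ and verify they cannot both vanish. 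The reductions themselves are routine once the role of $\begin{pmatrix} U & V\end{pmatrix}$ as an eliminator of $\bar z$ is recognized and Schur's complement supplies the invertibility that upgrades ``$\subseteq$'' to equality.
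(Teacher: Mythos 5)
Your proof is correct. For the equivalence \eqref{eq:inv} you follow essentially the same route as the paper: writing a kernel vector of the augmented matrix as $(u,v)^{\top}$, your symmetrized combinations $a=u+\bar v$ and $c=i(u-\bar v)$ are exactly the paper's device (there the kernel vector is written $(z,\overline{w})^{\top}$, one first gets $z+w=0$ and then feeds $iz$ back into the homogeneous system), and your rank--nullity shortcut for the converse direction (injectivity of the $\mathbb{R}$-linear map $\Phi$ on a finite-dimensional space implies bijectivity) harmlessly replaces the paper's explicit existence argument by conjugation. The one genuinely different step is the reverse inclusion in the reduction. The paper verifies that the map $z\mapsto Uz+V\bar z$ is injective (deduced from unique solvability after multiplying by $i$), which lets it run its chain of equivalences backwards for an arbitrary solution of the reduced system; you instead observe that $UM+V\overline{N}$ is itself invertible --- the Schur complement of $\overline{M}$ in the augmented matrix in the first case, and $N^{-1}$ times that Schur complement in the second --- so both solution sets are singletons and the inclusion you already have forces equality. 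Your route is slightly more economical and makes explicit the invertibility of the reduced coefficient matrix, a fact the paper only surfaces later, in Section \ref{sec:Application}, when it invokes Schur-complement eigenvalue interlacing; the paper's injectivity criterion has the mild advantage of applying verbatim to any pair $(U,V)$ satisfying $UN+V\overline{M}=0$, without first recognizing the reduced matrix as a Schur complement. Both arguments are complete.
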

\begin{proof}
($\Longrightarrow$) Consider $z,w \in \mathbb{C}^n$ such that
\begin{equation}
\begin{pmatrix}
M & N \\[.1cm]
\overline{N} & \overline{M} 
\end{pmatrix}
\begin{pmatrix}
z\\[.2cm]
\overline{w} 
\end{pmatrix}
=
\begin{pmatrix}
0 \\[.2cm]
0
\end{pmatrix}.
\end{equation}
This implies after conjugation that
\begin{equation}
\begin{pmatrix}
M & N \\[.1cm]
\overline{N} & \overline{M} 
\end{pmatrix}
\begin{pmatrix}
w  \\[.2cm]
\overline{z}
\end{pmatrix}
=
\begin{pmatrix}
0 \\[.2cm]
0
\end{pmatrix}.
\end{equation}
Adding the two previous equations yields
\begin{equation}
\begin{pmatrix}
M & N \\[.1cm]
\overline{N} & \overline{M} 
\end{pmatrix}
\begin{pmatrix}
z+w  \\[.2cm]
\overline{z+w}
\end{pmatrix}
=
\begin{pmatrix}
0 \\[.2cm]
0
\end{pmatrix}.
\end{equation}
Thus $z+w$ is a solution to system \eqref{eq:system} for $p=0$, whose unique solution is $0$. Hence $z+w = 0$. As a result, $Mz - N\bar{z} = 0$, that is to say $M(iz) + N(\overline{iz}) = 0$. Again by unicity of the solution, $i z = 0$. To conclude, $z = w = 0$. \\
($\Longleftarrow$)
\textit{Existence of a solution:} there exists $z,w \in \mathbb{C}^n$ such that
\begin{equation}
\begin{pmatrix}
M & N \\[.1cm]
\overline{N} & \overline{M} 
\end{pmatrix}
\begin{pmatrix}
z\\[.2cm]
\overline{w}
\end{pmatrix}
=
\begin{pmatrix}
p \\[.2cm]
\overline{p}
\end{pmatrix}
\end{equation}
which implies after conjugation that
\begin{equation}
\begin{pmatrix}
M & N \\[.1cm]
\overline{N} & \overline{M} 
\end{pmatrix}
\begin{pmatrix}
w\\[.2cm]
\overline{z}
\end{pmatrix}
=
\begin{pmatrix}
p \\[.2cm]
\overline{p}
\end{pmatrix}
\end{equation}
Since the system has a unique solution, it must be that $z=w$. Thus $z$ is a solution to \eqref{eq:system}.\\
\textit{Unicity of the solution:} consider $z,w \in \mathbb{C}^n$ such that $Mz +N\bar{z} = p$ and $Mw +N\overline{w} = p$. It must be that $M(z-w) + N\overline{(z-w)} = 0$, thus
\begin{equation}
\begin{pmatrix}
M & N \\[.1cm]
\overline{N} & \overline{M} 
\end{pmatrix}
\begin{pmatrix}
z-w\\[.2cm]
\overline{z-w}
\end{pmatrix}
=
\begin{pmatrix}
0 \\[.2cm]
0
\end{pmatrix}.
\end{equation}
Since the above block matrix is invertible, it follows that $z = w$.\\\\
To find suitable choices for the matrices $U$ and $V$, it suffices for them to satisfy that $UN + V \overline{M} = 0$ and that the application from $\mathbb{C}^n$ to itself defined by
\begin{equation}
z \longmapsto Uz + V\bar{z}
\end{equation}
is injective.
Indeed, in that case, we have the following equivalences for $z\in \mathbb{C}^n$:
\begin{equation}
Mz+N\bar{z} = p 
\end{equation}
\begin{equation}
\begin{pmatrix} Mz \\[.1cm] \overline{N}z \end{pmatrix} + \begin{pmatrix} N \bar{z} \\[.1cm] \overline{M} \bar{z} \end{pmatrix} =  \begin{pmatrix} p \\[.1cm] \overline{p} \end{pmatrix}
\end{equation}
\begin{equation}
(UM + V\overline{N})z + (UN+V\overline{M})\bar{z} = Up + V\overline{p}
\end{equation}
\begin{equation}
(UM + V\overline{N})z = Up + V\overline{p}.
\end{equation}
Let's assume that $M$ is invertible. The relationship $UN + V \overline{M} = 0$ is easy to prove. As for the injectivity, consider $z \in \mathbb{C}^n$ such that $z -N\overline{M}^{-1}\bar{z} = 0$. Then $w := M^{-1}z$ satisfies $Mw - N\overline{w} = 0$, so that $M(iw) + N(\overline{iw}) = 0$. By assumption, the unique solution is $iw = 0$, hence $z =0$. To treat the case where $M$ and $N$ are invertible, it suffices to multiply $U$ and $V$ by $N^{-1}$.
\end{proof}

\section{Application to real linear system}
\label{sec:Application}
Consider $m \in \mathbb{N}$, a matrix $A \in \mathcal{M}_m(\mathbb{R})$ that is positive definite, a vector $b \in \mathbb{R}^m$, and the linear system
\begin{equation}
\label{eq:linear}
A x = b , ~ x \in \mathbb{R}^m.
\end{equation}
Thanks to a Gauss pivot, we may assume that $m=2n$ and view the system over the set of complex numbers. The first half of the variables can be viewed as real parts of complex variables, and the second half as their imaginary parts. To this end, define $B,C,D \in \mathcal{M}_n(\mathbb{R})$ and $z,p \in \mathbb{C}^n$ such that
$$
 A =:
\begin{pmatrix}
B\hphantom{^T} & C \\
C^T & D
\end{pmatrix}
,~~
 z := \begin{pmatrix} x_1 \\ \vdots \\ x_n \end{pmatrix} + i \begin{pmatrix} x_{n+1} \\ \vdots \\ x_{2n} \end{pmatrix} ,~~
p := \begin{pmatrix} b_1 \\ \vdots \\ b_n \end{pmatrix} + i \begin{pmatrix} b_{n+1} \\ \vdots \\ b_{2n} \end{pmatrix}, 
$$
where $(\cdot)^T$ stands for transpose. System \eqref{eq:linear} is equivalent to
\begin{equation}
\label{eq:conjugate}
M z + N \bar{z} = p~ , ~~  z \in \mathbb{C}^n,
\end{equation}
where $M := (B+D-i(C-C^T))/2  \succ 0 $ (positive definiteness will become clear in the following) and $N := (B-D+i(C+C^T))/2$. Indeed, system \eqref{eq:linear} can be written as either of the three following equivalent systems:
\begin{enumerate}
\item $ 
\frac{1}{2}\begin{pmatrix}
B & C \\
C^T & D
\end{pmatrix}
\begin{pmatrix}
z+\bar{z} \\
i(\bar{z}-z)
\end{pmatrix}
=
b, $\\\\
\item $\left\{ \begin{array}{rcl}
\frac{B-iC}{2}z + \frac{B+iC}{2}\bar{z} & = & \Re p \\[.5em]
\frac{C^T-iD}{2}z + \frac{C^T+iD}{2}\bar{z} & = & \Im p, \\
\end{array}
\right.$
\\\\
\item $\frac{B+D-i(C-C^T)}{2} z + \frac{B-D+i(C+C^T)}{2} \bar{z} = p.$
\end{enumerate}
Proposition \ref{prop:system} informs us that system \eqref{eq:conjugate} is in turn equivalent to
\begin{equation}
\label{eq:embedding}
(M - N\overline{M}^{-1}\overline{N})z = p - N\overline{M}^{-1}\overline{p}.
\end{equation}
As a result, the real linear system \eqref{eq:linear} is equivalent to the complex linear system \eqref{eq:embedding}. To the best of our knowledge, this relationship between real and complex linear systems has not been presented in past literature. 

With regards to numerical aspects, an efficient method for solving the linear system \eqref{eq:linear} is the Cholesky factorization~\cite{cholesky1910} which requires roughly $m^3/3 = 8n^2/3$ flops. It searches for a lower triangular matrix $L \in \mathbb{R}^{m \times m}$ such that $A = LL^T$. Next, it successively solves for $Ly=b, ~ y \in \mathbb{R}^m$ and $L^Tx = y,~ x \in \mathbb{R}^m$. Unfortunately, the matrix multiplication in the complex linear system \eqref{eq:embedding} makes it more expensive to solve. However, the spectra of the positive definite matrices $\overline{M} \in \mathcal{M}_n(\mathbb{C})$ and $M - N\overline{M}^{-1}\overline{N} \in \mathcal{M}_n(\mathbb{C})$ are contractions of the spectrum of $A \in \mathcal{M}_{2n}(\mathbb{R})$. They hence have a better conditioning number (ratio of maximum to minimum eigenvalue) than $A$. This is relevant since a Cholesky factorization of both these matrices is required to solve the complex linear system \eqref{eq:embedding}. For work on the complex Cholesky factorization, see \cite{bereux2003,bereux2005}. 

We now clarify our statement regarding spectrum contraction. Given a Hermitian matrix $U \in \mathcal{M}_n(\mathbb{C})$, let $\lambda_1(U), \hdots, \lambda_n(U)$ denote its eigenvalues in increasing order. Given $\lambda \in \mathbb{R}$ and $z,w \in \mathbb{C}^m$, it holds that
$$
\begin{pmatrix}
M & N \\[.1cm]
\overline{N} & \overline{M}
\end{pmatrix}
\begin{pmatrix}
z\\[.2cm]
\overline{w}
\end{pmatrix}
=
\lambda
\begin{pmatrix}
z\\[.2cm]
\overline{w}
\end{pmatrix}
\Longleftrightarrow
\begin{pmatrix}
B\hphantom{^T} & C \\[.1cm]
C^T  & D
\end{pmatrix}
\begin{pmatrix}
\Re (z+w) \\[.1cm]
\Im (z+w)
\end{pmatrix}
=
\lambda
\begin{pmatrix}
\Re (z+w) \\[.1cm]
\Im (z+w)
\end{pmatrix}
$$
thereby equating the spectra of the two square matrices in the equation. Since $M$ is an extraction of the left-hand matrix, Cauchy's interlace theorem \cite{cauchy,lancaster1985,hwang2004,fisk2005} implies that 
\begin{equation}
\lambda_k (A)
\leqslant 
\lambda_k(M) \leqslant 
\lambda_{k+n} (A)~ , ~~~ k = 1, \hdots , n.
\end{equation}
In addition, the interlacing property of the eigenvalues of the Schur complement of a Hermitian matrix \cite[Theorem 5]{smith1992} implies that 
\begin{equation}
\lambda_k (A)
\leqslant 
\lambda_k(M-N\overline{M}^{-1} \overline{N}) \leqslant 
\lambda_{k+n} (A) ~ , ~~~ k = 1, \hdots , n.
\end{equation}
See \cite{zhang2005} for a nice reference on the Schur complement.
We now provide a numerical experiment. Consider the real linear system \eqref{eq:linear} with $m := 6$,
\begin{equation}
A:=
\left(
\begin{array}{rrrrrr}
2.0483 &  -0.3065 &   0.7403  & -0.3338 &   0.9431  &  1.4834 \\
   -0.3065 &   1.2538 &  -1.1144 &   0.7319 &  -0.2412  &  0.1729 \\
    0.7403  & -1.1144  &  1.8337 &  -0.6019 &  -0.0518  &  0.6788 \\
   -0.3338  &  0.7319 &  -0.6019  &  1.6525 &   0.4313 &   0.0371 \\
    0.9431  & -0.2412 &  -0.0518 &   0.4313 &   1.4893  &  0.3625 \\
    1.4834  &  0.1729 &   0.6788  &  0.0371  &  0.3625  &  1.5775
\end{array}
\right)
\end{equation}
and 
\begin{equation}
b :=
\left(
\begin{array}{r}
   -1.7746 \\
   -1.3900 \\ 
   -1.9215 \\
   -0.2593 \\
    1.3289 \\
    0.4696
\end{array}
\right).
\end{equation}
The data has been randomly generated with MATLAB R2015b. The eigenvalues of $A$ in increasing order are 
\begin{equation}
\label{eq:A}
\lambda(A) =
\left(
\begin{array}{r}
    0.0245\\
    0.1082\\
    1.0932\\
    1.4319\\
    2.8521\\
    4.3453
\end{array}
\right)
\end{equation}
MATLAB R2015b yields the following solution to $Ax = b, ~ x \in \mathbb{R}^m$:
\begin{equation}
\label{eq:x}
x =
\left(
\begin{array}{r}
    -33.1807\\
  -56.9574\\
  -42.5687\\
    2.4589\\
   -3.3323\\
   56.7669
\end{array}
\right)
\end{equation}
Based on the discussion in this section, we may instead solve the complex linear system $(M - N\overline{M}^{-1}\overline{N})z = p - N\overline{M}^{-1}\overline{p},~ z \in \mathbb{C}^n,$ where
\begin{equation}
M - N\overline{M}^{-1}\overline{N} =
\left(
\begin{array}{rrr}
0.5756 - 0.0000i &  0.3906 + 0.2060i  & -0.1576 - 0.5196i \\
   0.3906 - 0.2060i &  0.8131 - 0.0000i & -0.5220 - 0.6089i \\
  -0.1576 + 0.5196i  & -0.5220 + 0.6089i  & 0.9137 - 0.0000i
\end{array}
\right)
\end{equation}
and
\begin{equation}
p - N\overline{M}^{-1}\overline{p} =
\left(
\begin{array}{r}
0.1759 - 1.9830i\\
   2.1302 + 1.2597i\\
   1.4658 - 1.7835i 
\end{array}
\right).
\end{equation}
This yields
\begin{equation}
z =
\left(
\begin{array}{c}
    -33.1807 + 2.4589i\\
 -56.9574 - 3.3323i\\
 -42.5687 +56.7669i
\end{array}
\right)
\end{equation}
to be compared with $x$ in \eqref{eq:x}.

The eigenvalues of $M - N\overline{M}^{-1}\overline{N}$ in increasing order are
\begin{equation}
\lambda(M - N\overline{M}^{-1}\overline{N}) =
\left(
\begin{array}{r}
0.0463 \\
   0.2488 \\
   2.0073 
\end{array}
\right)
\end{equation}
which can be seen to interlace those of $A$ in \eqref{eq:A}. As a byproduct, the conditioning number of $M - N\overline{M}^{-1}\overline{N}$, equal to $43.3843$, is less than that of $A$, equal to $177.3795$. We remind the reader that the closer to 1, the better the conditioning of a matrix. 
\section{Conclusion}
\label{sec:Conclusion}
We classify linear systems involving complex variables and their conjugates with respect to systems involving only complex variables. Our main tool is the orthogonal projection. As a result, we obtain a new link between linear systems with real variables and linear systems with complex variables. Our findings are illustrated on several numerical examples. A natural question follows: when can algebraic varieties defined by non-holomorphic polynomials be reduced to varieties defined by holomorphic polynomials? This manuscript gives the answer when the polynomials are of degree one.

\section*{Acknowledgements}
I wish like to thank the anonymous reviewers for their helpful comments. Many thanks to Ximun Loyatho and Antoine Coutand for the fruitful discussions during my visit to Imswan.

\bibliography{mybib}{}
\bibliographystyle{siam}

\end{document}